\newtheorem{thm}{Theorem}
\newtheorem{lem}{Lemma}
\newtheorem{rmk}{Remark}
\numberwithin{equation}{section}
\def\BC{\mathbb C}
\def\BN{\mathbb N}
\def\BR{\mathbb R}
\def\cB{\mathcal B}
\def\cD{\mathcal D}
\def\rd{\mathrm d}
\def\Ga{\Gamma}
\def\La{\Lambda}
\def\Om{\Omega}
\def\al{\alpha}
\def\be{\beta}
\def\ga{\gamma}
\def\ve{\varepsilon}
\def\te{\theta}
\def\la{\lambda}
\def\vp{\varphi}
\def\om{\omega}
\def\f{\frac}
\def\pa{\partial}
\def\tri{\triangle}
\def\un{\underline}
\renewcommand{\labelenumi}{(\theenumi)}
\title[Blowup and global existence for time-fractional diffusion equations]
{Blowup in $L^1(\Om)$-norm and global existence\\
for time-fractional diffusion equations\\
with polynomial semilinear terms}
\author{Giuseppe Floridia$^1$, Yikan Liu$^2$ and 
Masahiro Yamamoto$^3$}
\thanks{
$^1$ Department of Basic and Applied Sciences for Engineering,
Sapienza Universit\`a di Roma, Via Antonio Scarpa 16,
00161 Roma, Italy. E-mail: {\tt giuseppe.floridia@uniroma1.it}\\
$^2$ Department of Mathematics, Kyoto University,
Kitashirakawa-Oiwakecho, Sakyo-ku, Kyoto 606-8502, Japan.
E-mail: {\tt liu.yikan.8z@kyoto-u.ac.jp}\\
$^3$ Graduate School of Mathematical Sciences,
The University of Tokyo, 3-8-1 Komaba, Meguro-ku,
Tokyo 153-8914, Japan;
Honorary Member of Academy of Romanian Scientists,
Ilfov, Nr. 3, Bucuresti, Romania;
Correspondence member of Accademia Peloritana dei Pericolanti,
Palazzo Universit\`a, Piazza S. Pugliatti 1, 98122 Messina, Italy.
E-mail: {\tt myama@ms.u-tokyo.ac.jp}}
\date{}
\begin{document}

\baselineskip=15pt
\maketitle

\begin{abstract}
This article is concerned with semilinear time-fractional diffusion equations
with polynomial nonlinearity $u^p$ in a bounded domain $\Om$
with the homogeneous Neumann boundary condition and positive initial values.
In the case of $p>1$, we prove the blowup of solutions $u(x,t)$ in the sense 
that $\|u(\,\cdot\,,t)\|_{L^1(\Om)}$ tends to $\infty$ as $t$ approaches
some value, 
by using a comparison principle for the corresponding ordinary differential 
equations
and constructing special lower solutions. Moreover, we provide 
an upper bound for the blowup time.
In the case of $0<p<1$, we establish the global existence of solutions in time
based on the Schauder fixed-point theorem.\medskip

{\bf Key words:} Semilinear time-fractional diffusion equation,
polynomial nonlinearity, blowup, global existence\medskip

{\bf Mathematics Subject Classification:} 35R11, 35K58, 35B44
\end{abstract}

\section{Introduction and Main Results}

Let $d=1,2,3$ and $\Om\subset\BR^d$ be a bounded domain
with smooth boundary $\pa\Om$. For $0<\al<1$,
let $\rd_t^\al$ denote the classical Caputo derivative:
$$
\rd_t^\al f(t):=\int^t_0\f{(t-s)^{-\al}}{\Ga(1-\al)}f'(s)\,\rd s,
\quad f\in W^{1,1}(0,T).
$$
Here, $\Ga(\,\cdot\,)$ denotes the gamma function.

For consistent discussions of semilinear time-fractional
diffusion equations, we extend the classic Caputo derivative
$\rd_t^\al$ as follows. First, for $0<\al<1$,
we define the Sobolev-Slobodecki space $H^\al(0,T)$ 
with the norm $\|\cdot\|_{H^\al(0,T)}$ as follows:
$$
\|f\|_{H^\al(0,T)}:=\left(\|f\|^2_{L^2(0,T)}+
\int^T_0\!\!\!\int^T_0\f{|f(t)-f(s)|^2}{|t-s|^{1+2\al}}\,\rd t\rd s\right)^{\f12}
$$
(e.g., Adams \cite{Ad}). Furthermore, we set $H^0(0,T):=L^2(0,T)$ and
$$
H_\al(0,T):=
\left\{\!\begin{alignedat}2
& H^\al(0,T), &\quad & 0<\al<\f12,\\
&\left\{f\in H^{\f12}(0,T);
\int^T_0\f{|f(t)|^2}t\,\rd t<\infty\right\},
&\quad & \al=\f12,\\
& \{f\in H^\al(0,T);f(0)=0\}, &\quad &\f12<\al\le1
\end{alignedat}\right.
$$
with the norms defined by 
$$
\|f\|_{H_\al(0,T)}:=
\left\{\!\begin{alignedat}2
& \|f\|_{H^\al(0,T)}, &\quad & \al\ne\f12,\\
& \left(\|f\|^2_{H^{\f12}(0,T)}+\int^T_0\f{|f(t)|^2}t\,\rd t\right)^{\f12}, 
& \quad &\al=\f12.
\end{alignedat}\right.
$$
Moreover, for $\be>0$, we set 
$$
J^\be f(t):=\int^t_0\f{(t-s)^{\be-1}}{\Ga(\be)}f(s)\,\rd s,
\quad0<t<T,\ f\in L^1(0,T).
$$
Then, it was proved e.g., in the study by Gorenflo et al.\! \cite{GLY}, that 
$J^\al:L^2(0,T)\longrightarrow H_\al(0,T)$ is an isomorphism for $\al\in(0,1)$.

Now we are ready to define the extended Caputo derivative
$$
\pa_t^\al:=(J^\al)^{-1},\quad\cD(\pa_t^\al)=H_\al(0,T).
$$
Henceforth, $\cD(\,\cdot\,)$ denotes the domain of an operator under consideration.
This is the minimum closed extension of $\rd_t^\al$ with
$\cD(\rd_t^\al):=\{v\in C^1[0,T];v(0)=0\}$ and
$\pa_t^\al v=\rd_t^\al v$ for $v\in C^1[0,T]$ satisfying
$v(0)=0$. As for the details, we can refer to the studies by
Gorenflo et al.\! \cite{GLY} and Yamamoto \cite{Y2023}.

This article is concerned with the following initial-boundary value problem for a nonlinear time-fractional diffusion equation:
\begin{equation}\label{1.1}
\begin{cases}
\pa_t^\al(u-a)=\tri u+u^p & \mbox{in }\Om\times(0,T),\\
\pa_\nu u=0 & \mbox{on }\pa\Om\times(0,T),
\end{cases}
\end{equation}
where $p>0$ is a constant.
The left-hand side of the time-fractional differential equation
in equation \eqref{1.1} means that $u(x,\,\cdot\,)-a(x)\in H_\al(0,T)$ for almost all
$x\in\Om$. For $\f12<\al<1$, since
$v\in H_\al(0,T)$ implies $v(0)=0$ by the trace theorem, we can
understand that the left-hand side means that $u(x,0)=a(x)$
in the trace sense with respect to $t$. As a result, this corresponds to the
initial condition for $\al>\f12$, whereas we do not need any
initial conditions for $\al<\f12$.

There are other formulations for initial-boundary value problems
for time-fractional partial differential equations
(e.g., Sakamoto and Yamamoto \cite{SY} and Zacher \cite{Za}), but
here we do not provide comprehensive references.
In the case of $\al=1$, concerning the non-existence of
global solutions in time, there have been enormous works since
Fujita \cite{F}, and we can refer to a comprehensive
monograph by Quittner and Souplet \cite{QS}.
We can refer to Fujishima and Ishige \cite{FI} and 
Ishige and Yagisita \cite{IshiY}
as related results to our first main result Theorem~\ref{thm1} stated below.
See also Chen and Tang \cite{CT22}, Du \cite{D22},
Feng et al.\! \cite{FQZC23}, and Tian and Xiang \cite{TX23}.

For $0<\al<1$, the time-fractional diffusion equation in \eqref{1.1} is
a possible model for describing anomalous diffusion
in heterogeneous media, and the semilinear term $u^p$ can describe
a reaction term.
There are also rapidly increasing interests for the non-existence
of global solutions to semilinear time-fractional differential
equations such as equation \eqref{1.1}. As recent works, we refer to
studies by Ahmad et al.\! \cite{AAAKT},
Borikhanov et al.\! \cite{BRT23},
Ghergu et al.\! \cite{GMS23},
Hnaien et al.\! \cite{HKL},
Kirane et al.\! \cite{KLT}, Kojima \cite{K23},
Suzuki \cite{S21,S22}, Vergara and Zacher \cite{VZ},
and Zhang and Sun \cite{ZS15}.
In \cite{VZ} and \cite{ZS15}, the blowup is considered by
$\|u(\,\cdot\,,t)\|_{L^1(\Om)}$.  
Since $L^1(\Om)$-norm is 
the weakest among the Lebesgue space norms, the choice $L^1(\Om)$ 
as spatial norm is sharp for consideration of the blowup.

Our approach is based on the comparison of solutions to
initial value problems for time-fractional ordinary
differential equations, which is similar to that by Ahmad et al.\! \cite{AAAKT} in the sense that
the scalar product of the solution with the first eigenfunction
of the Laplacian with the boundary condition is considered.
Vergara and Zacher, in their study \cite{VZ}, discuss stability, instability, and
blowup for time-fractional diffusion equations
with super-linear convex semilinear terms.

To the best knowledge of the authors, there are no publications
providing an upper bound of the blowup time for the time-fractional 
diffusion equation in $L^1(\Om)$-norm, which is weaker than 
$L^q(\Om)$-norm with $1<q\le \infty$.

Throughout this article, we assume $\f34<\ga\le1$. First, for $p>1$,
we recall a basic result on the unique existence of local 
solutions in time. For $a\in H^{2\ga}(\Om)$ satisfying 
$\pa_\nu a=0$ on $\pa\Om$ and $a\ge0$ in $\Om$,
Luchko and Yamamoto \cite{LY} proved
the unique existence, which is local in time $t$.
There exists a  constant $T>0$ depending on $a$ such that
\eqref{1.1} possesses a unique solution $u$ such that 
\begin{equation}\label{1.2}
u\in C([0,T];H^{2\gamma}(\Om)),\quad u-a\in H_\al(0,T;L^2(\Om))
\end{equation}
and $u\ge0$ in $\Om\times(0,T)$. 
The time length $T$ of the existence of $u$ 
does not depend on the choice of initial values and
only depends on a bound $m_0>0$ such that
$\|a\|_{H^{2\ga}(\Om)}\le m_0$, provided that $\pa_\nu a=0$ on $\pa\Om$.

We call $T_{\al,p,a} > 0$ the blowup time 
in $L^1(\Om)$ of the solution to 
\eqref{1.1} if  
\begin{equation}\label{eq-blowup}
\lim_{t\uparrow T_{\al,p,a}}\|u(\,\cdot\,,t)\|_{L^1(\Om)}=\infty.
\end{equation}
As the non-existence of global solutions in time, in this article we are 
concerned with the blowup in $L^1(\Om)$.

Now we are ready to state our first main results on the blowup 
with an upper bound of the blowup time for $p>1$.

\begin{thm}\label{thm1}
Let $p>1$ and $a\in H^{2\ga}(\Om)$ satisfy $\pa_\nu a=0$ on $\pa\Om$
and $a\ge0,\not\equiv0$ in $\Om$. Then, there exists some
$T=T_{\al,p,a} >0$ such that the solution satisfying \eqref{1.2} 
exists for $0<t<T_{\al,p,a}$ and \eqref{eq-blowup} holds. 
Moreover, we can bound $T_{\al,p,a}$ from above as:
\begin{equation}\label{1.4}
T_{\al,p,a} \le \left( \f1
{(p-1)\Ga(2-\al)\left( \f1{| \Om|}
\int_{\Om} a(x)\, \rd x \right)^{p-1} } \right)^{\f1{\al}}
=: T^*(\al,p,a).
\end{equation}
\end{thm}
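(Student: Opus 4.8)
The plan is to reduce \eqref{1.1} to a scalar time-fractional ordinary differential inequality for the spatial mean of $u$, and then to minorize that mean by an explicit lower solution that blows up precisely at $T^*(\al,p,a)$.

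First I would integrate the first equation of \eqref{1.1} over $\Om$ and put $V(t):=\f1{|\Om|}\int_\Om u(x,t)\,\rd x$, so that $V(0)=A:=\f1{|\Om|}\int_\Om a(x)\,\rd x>0$ because $a\ge0,\not\equiv0$. Since $\pa_t^\al(u-a)=(J^\al)^{-1}(u-a)$ means $u-a=J^\al(\tri u+u^p)$ for a.e.\ $x$, Fubini's theorem lets one interchange $\int_\Om\cdot\,\rd x$ with the Riemann--Liouville integral $J^\al$ -- legitimate because $u\in C([0,T];H^{2\ga}(\Om))$ and $H^{2\ga}(\Om)\hookrightarrow L^\infty(\Om)$ for $d\le3,\ \ga>\f34$, so $\tri u+u^p\in L^\infty(\Om\times(0,T))$ -- whence $\pa_t^\al$ commutes with $\int_\Om\cdot\,\rd x$. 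The Neumann condition removes the diffusion term, $\int_\Om\tri u\,\rd x=\int_{\pa\Om}\pa_\nu u\,\rd S=0$, and Jensen's inequality for the convex function $r\mapsto r^p$ ($p>1$), together with $u\ge0$, gives $\int_\Om u^p\,\rd x\ge|\Om|\,V^p$. Hence
\begin{equation*}
\pa_t^\al(V-A)\ge V^p\ \text{ in }(0,T),\qquad V(0)=A>0,
\end{equation*}
and applying the positivity-preserving operator $J^\al$ to this inequality also shows $V\ge A$ on $[0,T]$.

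Next I would build an explicit lower solution of the associated fractional ODE. Set
\begin{equation*}
\un w(t):=\bigl(A^{1-p}-ct\bigr)^{-\f1{p-1}},\qquad 0\le t<T^*(\al,p,a),\quad c:=\f{A^{1-p}}{T^*(\al,p,a)},
\end{equation*}
so that $\un w(0)=A$ and $\un w(t)\to\infty$ as $t\uparrow T^*(\al,p,a)$. To verify $\pa_t^\al(\un w-A)\le\un w^p$ on $(0,T^*)$ I would use the fractional chain-rule inequality $\pa_t^\al(H\circ g)\le H'(g)\,\pa_t^\al g$, valid for convex $H$, which follows by inserting the tangent-line estimate $H(g(t))-H(g(s))\le H'(g(t))\bigl(g(t)-g(s)\bigr)$ into the pointwise representation $\pa_t^\al g(t)=\f1{\Ga(1-\al)}\bigl(\f{g(t)-g(0)}{t^\al}+\al\int_0^t\f{g(t)-g(s)}{(t-s)^{1+\al}}\,\rd s\bigr)$ and using positivity of the kernel. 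Applied with the convex, decreasing $H(x)=x^{-1/(p-1)}$ and $g(t)=A^{1-p}-ct$, and since $\pa_t^\al g=-c\,\pa_t^\al t=-c\,t^{1-\al}/\Ga(2-\al)$, this yields $\pa_t^\al(\un w-A)\le\f{c\,t^{1-\al}}{(p-1)\Ga(2-\al)}\,\un w^p$, and the above choice of $c$ makes the prefactor $\le1$ throughout $(0,T^*)$ -- this is exactly where the factor $\Ga(2-\al)$ in \eqref{1.4} comes from. A comparison principle for time-fractional ODEs (established separately) then gives $V(t)\ge\un w(t)$ for $0\le t<T^*(\al,p,a)$.

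Finally, $\|u(\,\cdot\,,t)\|_{L^1(\Om)}=|\Om|\,V(t)\ge|\Om|\,\un w(t)\to\infty$ as $t\uparrow T^*(\al,p,a)$; since any solution in the class \eqref{1.2} satisfies $u(\,\cdot\,,t)\in H^{2\ga}(\Om)\subset L^1(\Om)$, it cannot be continued up to $T^*(\al,p,a)$, so $T_{\al,p,a}\le T^*(\al,p,a)$, and invoking the continuation criterion of the local theory one gets \eqref{eq-blowup}. The main difficulty is the middle step: proving rigorously that $\un w$ is a genuine lower solution -- i.e.\ establishing the fractional chain-rule inequality in the present functional setting and choosing $c$ so that the blow-up bound is precisely $T^*(\al,p,a)$ -- while the subsidiary technical points are the comparison principle for fractional ODEs itself (where the nonlocality of $\pa_t^\al$ blocks the naive maximum-principle argument and forces an approximation or monotone-iteration scheme) and ensuring, via the local well-posedness theory, that the blow-up genuinely occurs in the $L^1(\Om)$-norm rather than in a stronger norm.
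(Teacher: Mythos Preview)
Your argument is correct and shares the paper's overall architecture---integrate \eqref{1.1} over $\Om$, kill the Laplacian via the Neumann condition, use convexity (H\"older/Jensen) to obtain a fractional ordinary differential inequality for the spatial mean, build an explicit blowing-up lower solution, and apply a comparison principle (the paper's Lemma~\ref{lem2})---but the lower-solution step is genuinely different. The paper takes $\un\eta(t)=a_0\bigl(T/(T-t)\bigr)^m$ with integer $m$, expands $(T-t)^{-m}$ as a power series in $t/T$, differentiates termwise to compute $\rd_t^\al\un\eta$, and then estimates the resulting series against $\om_0\un\eta^p$; this occupies all of Step~2 in Section~\ref{sec-pf1} and carries the side condition $m\ge1/(p-1)$. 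You instead take the classical ODE blow-up profile $\un w(t)=(A^{1-p}-ct)^{-1/(p-1)}$ and bound $\pa_t^\al(\un w-A)$ in one stroke via the convex chain-rule inequality $\pa_t^\al\bigl(H\circ g-H(g(0))\bigr)\le H'(g)\,\pa_t^\al\bigl(g-g(0)\bigr)$, which reduces everything to the elementary identity $\pa_t^\al(ct)=c\,t^{1-\al}/\Ga(2-\al)$; the condition $c(T^*)^{1-\al}/((p-1)\Ga(2-\al))\le1$ then produces exactly the bound \eqref{1.4}. Your route is shorter and delivers $T^*(\al,p,a)$ without any integrality constraint on an auxiliary exponent; the paper's route is more self-contained in that it avoids importing the chain-rule lemma and relies only on explicit power-series manipulations. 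The comparison principle you defer as a ``subsidiary technical point'' is precisely Lemma~\ref{lem2} of the paper, proved there via Lemma~\ref{lem1} and Luchko's extremum principle, so it is already available within the paper's framework; and since both $\un w$ and $g$ are smooth on each $[0,T^*-\ve]$, the chain-rule inequality you sketch is rigorous there, which is all the comparison argument needs.
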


\begin{rmk}
{\rm (1) We note that $T^*(\al,p,a)$ decreases as $\int_\Om a(x)\,\rd x$ 
increases for arbitrarily fixed $p$ and $a$.
Meanwhile, $T^*(\al,p,a)$ tends to $\infty$ as $p>1$ approaches $1$, which is
consistent because $p=1$ is a linear case and we have no blowup.

(2) Estimate \eqref{1.4} corresponds to the estimate
in \cite[Remark 17.2(i) (p.105)]{QS} for $\al=1$.
On the other hand,
in the case of parabolic equations $\pa_tu = D\tri u + u^p$
with constant $D>0$, Ishige and Yagisita discussed
the asymptotics of the blowup time $T_{p,a}(D)$ and established
$$
T_{p,a}(D) = \f1{(p-1)\left( \f1{| \Om|}
\int_{\Om} a(x)\, \rd x \right)^{p-1} } + O\left( \f1{D}\right)
\quad\mbox{as $D \to \infty$}
$$
(\cite[Theorem 1.1]{IshiY}).
The principal term of the asymptotics coincides with 
the value obtained by substituting $\al=1$ in $T^*(\al,p,a)$ given by 
\eqref{1.4}. Thus, $T^*(\al,p,a)$ is not only one possible upper bound of
the blowup time for $0<\al<1$ but also seems to capture some essence.
Moreover, Ishige and Yagisita \cite{IshiY} clarifies the blowup set; see also 
the work of Fujishima and Ishige \cite{FI}. For $0<\al<1$, there are no such 
detailed available results.}
\end{rmk}

The second main result is the global existence of solutions to \eqref{1.1} 
for  $0<p<1$. 

\begin{thm}\label{thm2}
Let $0<p<1$ and $a\in H^{2\ga}(\Om)$ satisfy
$\pa_\nu a=0$ on $\pa\Om$ and $a\ge0$ in $\Om$.
For arbitrarily given $T>0,$ there exists a global solution $u$
to \eqref{1.1} with $T=\infty$ satisfying \eqref{1.2}.
\end{thm}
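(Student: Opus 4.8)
The plan is to construct a global solution by the Schauder fixed-point theorem, exploiting the sublinear growth $0<p<1$ to obtain a priori bounds that do not deteriorate in time. First I would recall the solution operator for the linear time-fractional problem: for $f\in C([0,T];L^2(\Om))$, the problem $\pa_t^\al(v-a)=\tri v+f$ in $\Om\times(0,T)$ with $\pa_\nu v=0$ has a unique mild solution
\begin{equation*}
v(\,\cdot\,,t)=S_\al(t)a+\int_0^t (t-s)^{\al-1}P_\al(t-s)f(\,\cdot\,,s)\,\rd s,
\end{equation*}
where $S_\al,P_\al$ are the usual Mittag-Leffler-type operator families built from the eigensystem $\{(\la_k,\vp_k)\}$ of $-\tri$ under the Neumann condition; here $\la_0=0$ with constant eigenfunction, so the zero mode is handled separately but causes no difficulty since it is governed by the scalar fractional ODE $\pa_t^\al(\overline u-\overline a)=\overline{u^p}$, which has a global solution because $u^p$ grows sublinearly. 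Denote this solution map by $v=\Phi(f)$ and define the nonlinear map $\cF(u):=\Phi(u^p)$ on a suitable closed convex subset $K$ of $C([0,T];H^{2\ga}(\Om))$.

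The core work is choosing $K$ so that $\cF(K)\subset K$. Because $p<1$, for any $R>0$ we have $\|u^p\|\le C(1+\|u\|^p)$, so the linear estimate yields $\|\cF(u)\|\le C_0+C_1 T^{\al}(1+R^p)$ in the relevant norm; since $R\mapsto C_0+C_1T^\al(1+R^p)$ is sublinear, it has a fixed point $R_*$, and I would take $K=\{u\in C([0,T];H^{2\ga}(\Om)):u\ge0,\ \|u\|_{C([0,T];H^{2\ga}(\Om))}\le R_*\}$. Nonnegativity is preserved because $a\ge0$ and $u^p\ge0$ force $v\ge0$ by the maximum principle for the fractional diffusion operator (this is where $a\ge0$ enters, and no lower bound such as $a\not\equiv0$ is needed, unlike in Theorem~\ref{thm1}). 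The sublinearity of $t\mapsto C_0+C_1t^\al(1+R^p)$ is exactly what makes $R_*$ independent of $T$, so the bound is uniform in $T$ and the solution extends to $T=\infty$.

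For the hypotheses of Schauder I then need $\cF$ continuous and $\cF(K)$ precompact in $C([0,T];H^{2\ga}(\Om))$. Continuity follows from the continuity of $u\mapsto u^p$ as a Nemytskii-type map together with boundedness of $\Phi$; one must be slightly careful that $t\mapsto t^p$ is only Hölder, not Lipschitz, near $0$, but on the bounded set $K$ with values in $[0,R_*]$ a dominated-convergence argument gives continuity of $u\mapsto u^p$ in $C([0,T];L^2(\Om))$, hence continuity of $\cF$. Precompactness comes from a smoothing estimate: the operator families $P_\al(t)$ map $L^2(\Om)$ into $\cD((-\tri)^{\sigma})$ for $\sigma$ slightly larger than $2\ga$ with an integrable singularity in $t$, yielding a uniform bound of $\cF(K)$ in $C([0,T];H^{2\ga+\delta}(\Om))$ for some $\delta>0$, together with equicontinuity in $t$; compact Sobolev embedding and Arzel\`a-Ascoli then give relative compactness in $C([0,T];H^{2\ga}(\Om))$. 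Applying Schauder's theorem produces a fixed point $u=\cF(u)$, which is the desired global solution satisfying \eqref{1.2}.

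The main obstacle I expect is the interplay between the low regularity of the nonlinearity near zero (the map $s\mapsto s^p$ is not Lipschitz at $s=0$) and the requirement that solutions stay nonnegative: one cannot simply truncate $u^p$ away from $0$ without destroying nonnegativity, so the fixed-point set $K$ must genuinely consist of nonnegative functions, and both the self-mapping property and the continuity of $\cF$ must be verified on that set. Establishing the maximum principle in the present weak functional-analytic framework for $\pa_t^\al$—rather than for the classical Caputo derivative—is the technical heart; I would obtain it either from a spectral/eigenfunction-expansion argument showing that $S_\al(t)$ and $(t-s)^{\al-1}P_\al(t-s)$ have nonnegative kernels, or by citing a comparison principle of the type already used for Theorem~\ref{thm1}.
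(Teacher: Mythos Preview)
Your overall strategy---Schauder fixed point plus the sublinearity of $u\mapsto u^p$ to secure an invariant set, with nonnegativity inherited from a comparison principle---matches the paper's. The execution, however, differs. The paper runs the fixed-point argument in the weak space $L^2(0,T;L^2(\Om))$: it sets $\cB=\{v\ge0:\|v-S(t)a\|_{L^2(0,T;L^2(\Om))}\le M\}$, picks $M$ so that $C(M+C\|a\|)^p\le M$ (possible precisely because $p<1$), and obtains compactness from an Aubin--Lions type embedding $L^2(0,T;H^{2\ve}(\Om))\cap H^\al(0,T;L^2(\Om))\hookrightarrow L^2(0,T;L^2(\Om))$ with small $\ve>0$. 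Only after securing a fixed point $u$ does the paper bootstrap to the regularity \eqref{1.2}. You instead work directly in $C([0,T];H^{2\ga}(\Om))$ and appeal to Arzel\`a--Ascoli with a gain of spatial regularity. Your route is natural and avoids the separate bootstrap, but the paper's choice of a weaker ambient space makes both the Nemytskii map and the compactness easier, and is more robust at the endpoint $\ga=1$: the smoothing bound $\|A^\sigma K(t)f\|\le C\,t^{\al(1-\sigma)-1}\|f\|$ has a non-integrable singularity once $\sigma\ge1$, so your plan to map $K$ into $C([0,T];H^{2\ga+\delta}(\Om))$ (or even into $C([0,T];H^{2\ga}(\Om))$) does not go through directly when $\ga=1$. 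For nonnegativity, the paper takes your second option and cites the comparison principle of Luchko--Yamamoto. One small correction: your $R_*$ does depend on $T$ through the factor $T^\al$; this is harmless since $T>0$ is arbitrary but fixed, and the paper proceeds the same way.
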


In Theorem \ref{thm2}, we cannot further conclude the uniqueness of the 
solution.
This is similar to the case of $\al=1$, where the uniqueness relies
essentially on the Lipschitz continuity of the semilinear term $u^p$
in $u\ge0$. Indeed, we can easily give a counterexample
by a time-fractional ordinary differential equation:
$$
\pa_t^\al y(t)=\f{\Ga(2\al+1)}{\Ga(\al+1)}y(t)^{\f12},
$$
where $y\in H_\al(0,T)$. Then, we can directly verify that both $y(t)=t^{2\al}$ and
$y(t)\equiv0$ are solutions to this initial value problem.

The key to the proof of Theorem \ref{thm1} is a comparison principle \cite{LY}
and a reduction to a time-fractional
ordinary differential equation. Such a reduction method can be found in the studies by
Kaplan \cite{Kp} and Payne \cite{Pa} for the case $\al=1$.
On the other hand, Theorem \ref{thm2} is proved by the Schauder fixed-point 
theorem with regularity properties of solutions \cite{Y2023}.
For a related method for Theorem \ref{thm2}, we refer to the study by
D\'iaz et al.\! \cite{DPV}.

This article is composed of five sections;
in Section \ref{sec-pre}, we show lemmata that complete the proof of Theorem \ref{thm1} in Section \ref{sec-pf1};
we prove Theorem \ref{thm2} in Section \ref{sec-pf2};
finally, Section \ref{sec-rem} is devoted to concluding remarks and discussions.

\section{Preliminaries}\label{sec-pre}

We will prove the following two lemmata.

\begin{lem}\label{lem1}
Let $f\in L^2(0,T)$ and $c\in C[0,T]$.
Then, there exists a unique solution $y\in H_\al(0,T)$ to
$$
\pa_t^\al y-c(t)y=f,\quad0<t<T.
$$
Moreover, if $f\ge0$ in $(0,T)$, then $y\ge0$ in $(0,T)$.
\end{lem}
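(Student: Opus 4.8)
The plan is to establish existence, uniqueness, and nonnegativity of the solution to the linear time-fractional ODE $\pa_t^\al y - c(t)y = f$ by converting the problem to a fixed-point equation in the integral formulation and then exploiting the positivity-preserving property of the fractional integral operator $J^\al$.

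First I would recast the equation: since $\pa_t^\al = (J^\al)^{-1}$ with $\cD(\pa_t^\al) = H_\al(0,T)$, a function $y \in H_\al(0,T)$ solves $\pa_t^\al y - c(t)y = f$ in $L^2(0,T)$ if and only if $y = J^\al(cy) + J^\al f$, i.e. $(I - J^\al c)y = J^\al f$ where here $c$ denotes the multiplication operator $y \mapsto c(t)y$ on $L^2(0,T)$. Because $c \in C[0,T]$ is bounded and $J^\al$ is bounded on $L^2(0,T)$, the operator $J^\al c$ is bounded on $L^2(0,T)$. The standard trick is that $J^\al$ on a short interval has small norm, or more cleanly, that a Gronwall-type argument shows $I - J^\al c$ is invertible on all of $L^2(0,T)$: one can argue via the Neumann series after observing that $\|(J^\al c)^n\|$ decays fast because iterated fractional integrals $(J^\al)^n = J^{n\al}$ and $\|J^{n\al}\|_{L^1 \to L^1} \le T^{n\al}/\Ga(n\al+1) \to 0$. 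Hence $y := (I - J^\al c)^{-1} J^\al f \in L^2(0,T)$; since $J^\al f \in H_\al(0,T)$ and $J^\al(cy) \in H_\al(0,T)$ (as $cy \in L^2(0,T)$), we get $y \in H_\al(0,T)$, and uniqueness follows from invertibility.

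For the nonnegativity claim, suppose $f \ge 0$. Here I would use a bootstrap/iteration argument: set $y_0 := J^\al f \ge 0$ (since $J^\al$ has a nonnegative kernel) and $y_{n+1} := J^\al(c y_n) + J^\al f$. If $c \ge 0$ on $[0,T]$, then inductively $y_n \ge 0$ and $y_n \to y$ in $L^2(0,T)$ by the contraction/Neumann-series convergence, so $y \ge 0$. To remove the assumption $c \ge 0$, replace $c$ by $c + \lambda$ for a large constant $\lambda > 0$ so that $c + \lambda \ge 0$, which amounts to studying $\pa_t^\al y - (c+\lambda)y = f - \lambda y$; alternatively, and more directly, substitute $y(t) = E(t) z(t)$ or absorb the constant shift via the known fact (e.g. from \cite{LY} or the Mittag-Leffler machinery) that $\pa_t^\al w - \lambda w = g$ with $g \ge 0$ has nonnegative solution $w = \int_0^t (t-s)^{\al-1} E_{\al,\al}(\lambda(t-s)^\al) g(s)\,\rd s$ with a nonnegative kernel. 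So the cleanest route is: write $c(t) = (c(t) + \lambda) - \lambda$ with $\lambda := \|c\|_{C[0,T]}$, rewrite the equation as $\pa_t^\al y + \lambda y = (c+\lambda) y + f$, invert $\pa_t^\al + \lambda$ using the Mittag-Leffler kernel $G_\la(t) := t^{\al-1} E_{\al,\al}(-\lambda t^\al) \ge 0$, obtaining the fixed-point equation $y = G_\la * ((c+\lambda)y + f)$, and run the monotone iteration as above — since $G_\la \ge 0$, $c + \lambda \ge 0$, and $f \ge 0$, all iterates are nonnegative and the limit $y$ is nonnegative.

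The main obstacle is handling the nonnegativity when $c$ changes sign; this is exactly why the shift-by-$\lambda$ reformulation and the explicit nonnegative Mittag-Leffler kernel are essential, rather than working directly with $J^\al$. A secondary technical point is making sure the Neumann series / monotone iteration converges in the right topology ($L^2(0,T)$ suffices, but one should check that $G_\la * (\cdot)$ maps $L^2$ into $L^2$ and is eventually contractive, which follows since $G_\la \in L^1(0,T)$); one should also confirm that the limit genuinely lies in $H_\al(0,T) = \cD(\pa_t^\al)$, which is immediate once $y \in L^2(0,T)$ because then the right-hand side $(c+\lambda)y + f \in L^2(0,T)$ and $\pa_t^\al+\lambda$ maps $\cD(\pa_t^\al)$ onto $L^2(0,T)$ bijectively. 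I expect the authors' proof to follow essentially this line, possibly citing \cite{LY} or \cite{GLY} for the invertibility and the nonnegativity of the relevant kernels.
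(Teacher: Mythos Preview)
Your argument is correct. The paper's own proof is two sentences of citations: unique existence is deferred to Kubica--Ryszewska--Yamamoto \cite[Section 3.5]{KRY}, and nonnegativity to the argument of Luchko--Yamamoto \cite{LY}, which in turn rests on Luchko's extremum principle \cite{L2009}. So your write-up is considerably more self-contained than what the authors actually present.

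The genuine methodological difference is in the nonnegativity part. The paper's cited route is an extremum-principle argument: roughly, if $y$ attained a negative minimum at some $t_0\in(0,T)$, one derives a sign for $\pa_t^\al y(t_0)$ that contradicts the equation. Your route is instead constructive: after the $\lambda$-shift you exhibit the solution as a Neumann series $y=\sum_{n\ge0}\bigl(G_\la*(c+\la)\bigr)^n(G_\la*f)$ with every term nonnegative. Each approach has its advantages. The extremum principle transfers directly to the PDE setting and to more general elliptic parts, which is why the authors favour it (they need exactly that in \cite{LY} for the comparison principle underlying Theorem~\ref{thm1}); on the other hand it implicitly requires enough pointwise regularity to speak of a minimum, which for $y\in H_\al(0,T)$ with $\al\le\f12$ is not automatic and has to be recovered by approximation. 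Your kernel/iteration argument works entirely in $L^2$ and sidesteps that issue, at the price of relying on the explicit nonnegativity of $E_{\al,\al}(-s)$ for $s\ge0$. One small tightening: ``$G_\la\in L^1(0,T)$'' alone does not give eventual contractivity; what you actually use is the semigroup identity for iterated fractional integrals together with the $1/\Ga(n\al+1)$ decay, exactly as in your existence step --- worth saying explicitly.
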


\begin{proof}
The unique existence of $y$ is proved in
Kubica et al.\! \cite[Section 3.5]{KRY} for example.
The non-negativity $y\ge0$ in $(0,T)$ follows from the same
argument in the study by Luchko and Yamamoto \cite{LY}, which is based on the
extremum principle by Luchko \cite{L2009}.
\end{proof}

\begin{lem}\label{lem2}
Let $c_0>0,$ $a_0\ge0,$ $p>1$ be constants and
$y-a_0,z-a_0\in H_\al(0,T)\cap C[0,T]$ satisfy
$$
\pa_t^\al(y-a_0)\ge c_0y^p,\quad
\pa_t^\al(z-a_0)\le c_0z^p\quad\mbox{in }(0,T).
$$
Then, $y\ge z$ in $(0,T)$.
\end{lem}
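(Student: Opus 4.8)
The plan is to linearize the superlinear right-hand side and then invoke the positivity statement of Lemma~\ref{lem1}. Throughout I use that $y,z\ge0$ in $[0,T]$, which is implicit in the statement (otherwise $y^p,z^p$ are not even defined for non-integer $p$, and in the intended application both functions are nonnegative).

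First I would set $w:=y-z$. Since $H_\al(0,T)$ is a vector space and $\pa_t^\al=(J^\al)^{-1}$ is linear, $w=(y-a_0)-(z-a_0)\in H_\al(0,T)\cap C[0,T]$ and $\pa_t^\al w=\pa_t^\al(y-a_0)-\pa_t^\al(z-a_0)$; subtracting the two hypotheses gives $\pa_t^\al w\ge c_0(y^p-z^p)$ in $(0,T)$. Next I would introduce the coefficient
$$
c(t):=c_0\,p\int_0^1\bigl(z(t)+s(y(t)-z(t))\bigr)^{p-1}\,\rd s,
$$
so that $c_0(y^p-z^p)=c(t)\,w$ by the fundamental theorem of calculus applied to $s\mapsto(z+s(y-z))^p$. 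Because $p>1$ the map $\xi\mapsto\xi^{p-1}$ is continuous on $[0,\infty)$ and the convex combination $z+s(y-z)$ stays in $[0,\infty)$, so $c\in C[0,T]$ and $c\ge0$ on $[0,T]$. Hence
$$
\pa_t^\al w-c(t)\,w\ge0\quad\mbox{in }(0,T).
$$

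Finally I would apply Lemma~\ref{lem1}. Put $f:=\pa_t^\al w-c(t)\,w$; then $f\ge0$ in $(0,T)$, and $f\in L^2(0,T)$ since $\pa_t^\al w\in L^2(0,T)$ (as $w\in H_\al(0,T)=J^\al(L^2(0,T))$) and $c\,w\in C[0,T]\subset L^2(0,T)$. By Lemma~\ref{lem1}, the equation $\pa_t^\al v-c(t)\,v=f$ has a unique solution $v\in H_\al(0,T)$, and that solution is nonnegative because $f\ge0$. Since $w\in H_\al(0,T)$ solves the same equation, uniqueness yields $w=v\ge0$, i.e.\ $y\ge z$ in $(0,T)$.

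The routine parts are the linearity of $\pa_t^\al$ and the $L^2$-membership of $f$; the one point requiring care is the construction of $c(t)$, namely checking that it is continuous up to $t=0$ and nonnegative. This is exactly where the hypotheses $p>1$ and $y,z\ge0$ are used: for $0<p<1$ the function $\xi^{p-1}$ is singular at $\xi=0$ and this linearization breaks down, which is consistent with the non-uniqueness example in the Introduction. No separate initial condition on $w$ is needed, as it is encoded in $w\in H_\al(0,T)$ and handled internally by Lemma~\ref{lem1}.
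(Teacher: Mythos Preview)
Your proof is correct and follows the same strategy as the paper: subtract to obtain $\pa_t^\al w - c(t)\,w \ge 0$ for a continuous coefficient $c$, then apply Lemma~\ref{lem1}. Your integral representation $c(t)=c_0\,p\int_0^1\bigl((1-s)z+sy\bigr)^{p-1}\rd s$ is in fact the same function the paper defines piecewise as the difference quotient $(y^p-z^p)/(y-z)$ (with value $p\,y^{p-1}$ on the coincidence set $\{y=z\}$), but your formula makes the continuity of $c$ immediate and bypasses the paper's case analysis.
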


\begin{proof}
We set
$$
\pa_t^\al(y-a_0)-c_0y^p=:f\ge0,\quad
\pa_t^\al(z-a_0)-c_0z^p=:g\le0.
$$
Since $y-a_0,z-a_0\in H_\al(0,T)\cap C[0,T]$,
we see that $f,g\in L^2(0,T)$. Setting
$$
\te:=y-z=(y-a_0)-(z-a_0)\in H_\al(0,T),
$$
we have
$$
\pa_t^\al\te	-c_0(y^p-z^p)=f-g\ge0\quad\mbox{in }(0,T).
$$
We can further prove that
\begin{equation}\label{2.1}
\pa_t^\al\te(t)-c_0c(t)\te(t)\ge0,\quad0<t<T,
\end{equation}
where
\begin{equation}\label{2.2}
c(t):=\left\{\!\begin{alignedat}2
& \f{y^p(t)-z^p(t)}{y(t)-z(t)}, & \quad & y(t)\ne z(t),\\
& p\,y^{p-1}(t), & \quad & y(t)=z(t).
\end{alignedat}\right.
\end{equation}
Indeed, we set $\La:=\{t\in [0,T];y(t)\ne z(t)\}$.
For $t_0\in\La$, we immediately see that 
$c(t_0)\te(t_0)=y(t_0)^p-z(t_0)^p$.
For $t_0 \not\in\La$, i.e.,
$$
\te(t_0)=(y(t_0)-a_0)-(z(t_0)-a_0)=0,
$$
first, we assume that there does not exist any sequence $\{t_n\}\subset\La$
such that $t_n\to t_0$. Then, there exists some small $\ve_0>0$ such that
$(t_0-\ve_0,t_0+\ve_0)\cap\La=\emptyset$. This means
$\te(t)=0$ for $t_0-\ve_0<t<t_0+\ve_0$, and thus,
$$
c(t)\te(t)=p\,y^{p-1}(t)\te(t)=0,\quad y^p(t)-z^p(t)=0,
\quad t_0-\ve_0<t<t_0+\ve_0.
$$
Hence, we obtain $c(t_0)\te(t_0)=y^p(t_0)-z^p(t_0)$.

Next, assume that there exists a sequence $\{t_n\}\subset\La$ such that 
$t_n\longrightarrow t_0\notin\La$ as $n\to\infty$.
By $t_n\in\La$, we have $y(t_n)\ne z(t_n)$ and 
$$
c(t_n)\te(t_n)=\f{y^p(t_n)-z^p(t_n)}{y(t_n)-z(t_n)}\te(t_n),
\quad n\in\BN.
$$
Since $y,z,\te\in C[0,T]$ and $\te(t_0)=0$,
we employ the mean value theorem to conclude
$$
\lim_{n\to\infty}\f{y^p(t_n)-z^p(t_n)}{y(t_n)-z(t_n)}\te(t_n)
=p\,y^{p-1}(t_0)\te(t_0)=0.
$$
Hence, again we arrive at $c(t_0)\te(t_0)=y^p(t_0)-z^p(t_0)$ in this case.
Thus, we have verified \eqref{2.1} with \eqref{2.2}.
Moreover, since $y,z\in C[0,T]$, we can verify that $c\in C[0,T]$.

Therefore, a direct application of Lemma \ref{lem1} to \eqref{2.1} yields
$\te\ge0$ in $(0,T)$ or equivalently $y\ge z$ in $(0,T)$.
Thus, the proof of Lemma \ref{lem2} is complete.
\end{proof}

\section{Completion of proof of Theorem \ref{thm1}}\label{sec-pf1}

{\bf Step 1.} We set 
$$
\eta(t):=\int_\Om u(x,t)\,\rd x=\int_\Om(u(x,t)-a(x))\,\rd x+a_0,\quad0<t<T,
$$
where $a_0:=\int_\Om a(x)\,\rd x$.
Here, we see that $a_0>0$ because $a\ge0,\not\equiv0$ in $\Om$ by the
assumption of Theorem \ref{thm1}.

\begin{rmk}\label{rmk1}
{\rm We note that $\eta(t)$ is the inner product of 
the solution $u(\,\cdot\,,t)$ with
the first eigenfunction $1$ of $-\tri$ with the homogeneous Neumann
boundary condition.  As for the parabolic case, we can refer to the studies by
Kaplan \cite{Kp} and Payne \cite{Pa}.}
\end{rmk}

Henceforth, we assume that the solution $u$ to \eqref{1.1} within the class 
\eqref{1.2} exists for $0<t<T$. 
By \eqref{1.2}, we have $\int_\Om(u(x,t)-a(x))\,\rd x\in H_\al(0,T)$.
Fixing $\ve>0$ arbitrarily small, we see
$$
\eta(t)-a_0=\int_\Om(u(x,t)-a(x))\,\rd x\in H_\al(0,T-\ve),
$$
and hence,
$$
\pa_t^\al(\eta(t)-a_0)=\int_\Om \pa_t^\al(u-a)(x,t)\,\rd x,\quad0<t<T-\ve.
$$
Since $\pa_\nu u=0$ on $\pa\Om\times(0,T-\ve)$, Green's formula and
the governing equation $\pa_t^\al(u-a)=\tri u+u^p$ yield
\begin{equation}\label{3.1}
\pa_t^\al(\eta(t)-a_0)=\int_\Om\tri u(x,t)\,\rd x+\int_\Om u^p(x,t)\,\rd x
=\int_\Om u^p(x,t)\,\rd x,\quad0<t<T-\ve.
\end{equation}
On the other hand, introducing the H\"older conjugate $q>1$ of $p>1$,
i.e., $\f1q+\f1p=1$, it follows from $u\ge0$ in $\Om\times(0,T-\ve)$
and the H\"older inequality that 
$$
\eta(t)=\int_\Om u(x,t)\,\rd x
\le\left(\int_\Om u^p(x,t)\,\rd x\right)^{\f1p}\left(\int_\Om\rd x\right)^{\f1q}
=|\Om|^{\f1q}\left(\int_\Om u^p(x,t)\,\rd x\right)^{\f1p},
$$
i.e., 
\begin{equation}\label{3.2}
\int_\Om u^p(x,t)\,\rd x\ge\om_0\,\eta^p(t),\quad\om_0:=|\Om|^{-\f p q}.
\end{equation}
By \eqref{3.1} and \eqref{3.2}, we obtain
\begin{equation}\label{3.3}
\pa_t^\al(\eta(t)-a_0)\ge\om_0\eta^p(t),\quad0<t<T-\ve.
\end{equation}

{\bf Step 2.} This step is devoted to the construction of
a lower solution $\un\eta(t)$ satisfying
\begin{equation}\label{eq-lower}
\pa_t^\al(\un\eta(t)-a_0)(t)\le\om_0\,\un\eta^p(t),\quad0<t<T-\ve,\quad\lim_{t\uparrow T}\un\eta(t)=\infty.
\end{equation}
We restrict the candidates of such a lower solution to
\begin{equation}\label{def-lowsol}
\un\eta(t):=a_0\left(\f T{T-t}\right)^m,\quad m\in\BN.
\end{equation}
To evaluate $\pa_t^\al(\un\eta(t)-a_0)(t)=\rd_t^\al\un\eta(t)$,
by definition, we have to represent $\f\rd{\rd t}(\f1{(T-t)^m})$
in terms of the Maclaurin expansion. First, direct calculations yield
\[
\f{\rd^m}{\rd t^m}\left(\f1{T-t}\right)=\f{m!}{(T-t)^{m+1}},
\]
and thus,
\begin{equation}\label{eq-calc0}
\f\rd{\rd t}\left(\f1{(T-t)^m}\right)=\f m{(T-t)^{m+1}}
=\f1{(m-1)!}\f{\rd^m}{\rd t^m}\left(\f1{T-t}\right).
\end{equation}
Next, by termwise differentiation, we have
$$
\f1{T-t}=\sum_{k=0}^\infty\f{t^k}{T^{k+1}},\quad
\f\rd{\rd t}\left(\f1{T-t}\right)
=\sum_{k=1}^\infty\f{kt^{k-1}}{T^{k+1}}
$$
for $0\le t\le T-\ve$. Repeating the calculations and by induction, we reach
\begin{equation}\label{eq-calc1}
\f{\rd^m}{\rd t^m}\left(\f1{T-t}\right)
=\sum_{k=m}^\infty\f{k(k-1)\cdots(k-m+1)}{T^{k+1}}t^{k-m}
=\f1{T^{m+1}}\sum_{k=0}^\infty\prod_{j=1}^m(k+j)\left(\f t T\right)^k.
\end{equation}
Plugging \eqref{eq-calc1} into \eqref{eq-calc0}, we obtain
\[
\f\rd{\rd t}\left(\f1{(T-t)^m}\right)=\f1{T^{m+1}(m-1)!}
\sum_{k=0}^\infty\prod_{j=1}^m(k+j)\left(\f t T\right)^k.
\]
Then, by the definition of $\rd_t^\al$, we calculate
\begin{align*}
\rd_t^\al\left(\f1{(T-t)^m}\right)
& =\int_0^t\f{(t-s)^{-\al}}{\Ga(1-\al)}\f\rd{\rd s}\left(\f1{(T-s)^m}\right)\rd s\\
& =\f1{\Ga(1-\al)T^{m+1}(m-1)!}\sum_{k=0}^\infty\f{\prod_{j=1}^m(k+j)}{T^k}\int_0^t(t-s)^{-\al}s^k\,\rd s.
\end{align*}
Here, we employ integration by substitution $s=t\xi$ and the beta function to treat
\begin{align*}
\int_0^t(t-s)^{-\al}s^k\,\rd s
& =t^{k+1-\al}\int_0^1(1-\xi)^{-\al}\xi^k\,\rd\xi\\
& =t^{k+1-\al}B(1-\al,k+1)=\f{\Ga(1-\al)\,k!}{\Ga(k+2-\al)}t^{k+1-\al},
\end{align*}
which implies
\begin{align*}
\rd_t^\al\left(\f1{(T-t)^m}\right)
& =\f1{T^{m+1}(m-1)!}\sum_{k=0}^\infty\f{\prod_{j=1}^m(k+j)\,k!}{\Ga(k+2-\al)}\f{t^{k+1-\al}}{T^k}\\
& =\f{t^{1-\al}}{T^{m+1}(m-1)!}\sum_{k=0}^\infty\f{(k+m)!}{\Ga(k+2-\al)}
\left(\f t T\right)^k.
\end{align*}
Since $\Ga(s)$ is monotone increasing in $s>2$ and $0<\Ga(2-\al)<1$,
for $k\in\BN\cup\{0\}$, we directly estimate
\[
\Ga(k+2-\al)\ge\left.\begin{cases}
\Ga(2-\al), & k=0,\\
\Ga(k+1)=k!, & k\in\BN
\end{cases}\right\}\ge\Ga(2-\al)\,k!.
\]
Then, we can bound $\rd_t^\al(\f1{(T-t)^m})$ from above as follows:
\begin{align*}
\rd_t^\al\left(\f1{(T-t)^m}\right) & \le\f{T^{1-\al}}{T^{m+1}(m-1)!}
\sum_{k=0}^\infty\f{(k+m)!}{\Ga(2-\al)\,k!}\left(\f t T\right)^k\\
& =\f1{\Ga(2-\al)\,T^{m+\al}(m-1)!}
\sum_{k=0}^\infty\prod_{j=1}^m(k+j)\left(\f t T\right)^k.
\end{align*}
For the series above, we utilize \eqref{eq-calc0} and \eqref{eq-calc1} again to 
find
\[
\f1{(T-t)^{m+1}}=\f1{m!}\f{\rd^m}{\rd t^m}\left(\f1{T-t}\right)
=\f1{T^{m+1}m!}\sum_{k=0}^\infty\prod_{j=1}^m(k+j)\left(\f t T\right)^k,
\]
indicating
\[
\rd_t^\al\left(\f1{(T-t)^m}\right)
\le\f1{\Ga(2-\al)\,T^{m+\al}(m-1)!}\f{T^{m+1}m!}{(T-t)^{m+1}}
= \f{T^{1-\al}m}{\Ga(2-\al)}\f1{(T-t)^{m+1}}.
\]
Recalling the definition \eqref{def-lowsol} of $\un\eta(t)$,
we eventually arrive at
\begin{equation}\label{eq-calc2}
\pa_t^\al(\un\eta(t)-a_0)=\rd_t^\al\un\eta(t)
=a_0T^m\rd_t^\al\left(\f1{(T-t)^m}\right)
\le\f{a_0T^{m+1-\al}m}{\Ga(2-\al)}\f1{(T-t)^{m+1}}.
\end{equation}
Note that \eqref{eq-calc2} holds for arbitrary $m\in\BN$, $T>0$, and $0<t<T-\ve$.

Finally, we claim that for any $p>1$ and $a_0>0$,
there exist constants $m\in\BN$ and $T>0$ such that
\begin{equation}\label{eq-calc3}
\f{a_0T^{m+1-\al}m}{\Ga(2-\al)}\f1{(T-t)^{m+1}}
\le\om_0\,\un\eta^p(t)=\f{\om_0a_0^pT^{m p}}{(T-t)^{m p}},
\quad0<t<T-\ve.
\end{equation}
In fact, \eqref{eq-calc3} is achieved by 
\[
\f{a_0T^{-\al}m}{\Ga(2-\al)} \le \om_0 a_0^p\left( \f{T}{T-t}\right)
^{mp-(m+1)} \quad \mbox{for $0<t<T$},
\]
which holds if 
\[
\f{a_0T^{-\al}m}{\Ga(2-\al)} \le \om_0 a_0^p
\]
by $mp-(m+1) \ge 0$ and $\f{T}{T-t} \ge 1$ for $0<t<T$.
Therefore, if 
\begin{align}
T & \ge\left(\f m{\Ga(2-\al)\,\om_0a_0^{p-1}}\right)^{\f1\al}
\ge\left(\f1{(p-1)\,\Ga(2-\al)\,\om_0a_0^{p-1}}\right)^{\f1\al}\nonumber\\
& =\left\{(p-1)\,\Ga(2-\al)\left(\f1{|\Om|}\int_\Om a\,\rd x\right)^{p-1}\right\}^{-\f1\al} =: T^*(\al,p,a),
\label{eq-lowT}
\end{align}
then \eqref{eq-calc3} is satisfied.
 
With the above chosen $m$ and $T^*(\al,p,a)$, consequently, it follows from
\eqref{eq-calc2} and \eqref{eq-calc3} that
$$
\un\eta(t)=a_0\left( \f{T^*(\al,p,a)}{T^*(\al,p,a)-t}\right)^m
$$
satisfies \eqref{eq-lower}.

Now it suffices to apply Lemma \ref{lem2} to
\eqref{eq-lower} and \eqref{3.3} on $[0,\,T^*(\al,p,a)-\ve]$ to obtain
$$
\eta(t)\ge\un\eta(t),\quad0\le t\le T^*(p,a)-\ve.
$$
Since $\ve>0$ was arbitrarily chosen, we obtain
$$
\int_\Om u(x,t)\,\rd x=\eta(t)\ge\un\eta(t)
= \f{a_0T^*(p,a)^m}{(T^*(p,a)-t)^m},\quad 0<t<T^*(\al,p,a).
$$
Since $\eta(t) = \|u(\,\cdot\,,t)\|_{L^1(\Om)}$, 
this means that the solution $u$ cannot exist for $t > T^*(\al,p,a)$.
Hence, the blowup time $T_{p,a} \le T^*(\al,p,a)$.
The proof of Theorem \ref{thm1} is complete.

\section{Proof of Theorem \ref{thm2}}\label{sec-pf2}

{\bf Step 1.} Henceforth, we denote the norm and the inner product of $L^2(\Om)$ by
$$
\|a\|:=\|a\|_{L^2(\Om)},\quad(a,b):=\int_\Om a(x)b(x)\,\rd x,
$$
respectively. We show the following lemma.

\begin{lem}\label{lem3}
Let $0<p<1,$ $0\le w\in L^2(\Om)$ and $0\le\eta\in L^2(0,T)$. Then,
$$
\|w^p\|\le|\Om|^{\f{1-p}2}\|w\|^p,\quad
\|\eta^p\|_{L^2(0,T)}\le T^{\f{1-p}2}\|\eta\|^p_{L^2(0,T)}.
$$
\end{lem}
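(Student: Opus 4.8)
The plan is to obtain both inequalities from a single application of Hölder's inequality with the conjugate exponents $\f1p$ and $\f1{1-p}$; the hypothesis $0<p<1$ is precisely what guarantees that both exponents are strictly larger than $1$ and satisfy $p+(1-p)=1$.

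First I would treat the spatial bound. Writing $\|w^p\|^2=\int_\Om w(x)^{2p}\cdot1\,\rd x$ and applying Hölder's inequality to the two factors $w^{2p}$ and $1$ with exponents $\f1p$ and $\f1{1-p}$ gives
\[
\int_\Om w^{2p}\cdot1\,\rd x
\le\left(\int_\Om\left(w^{2p}\right)^{\f1p}\rd x\right)^p
\left(\int_\Om 1\,\rd x\right)^{1-p}
=\left(\int_\Om w^2\,\rd x\right)^p|\Om|^{1-p},
\]
that is, $\|w^p\|^2\le|\Om|^{1-p}\|w\|^{2p}$, and taking square roots yields the first claim. In particular this shows $w^p\in L^2(\Om)$, and $w^p\ge0$ is clear from $w\ge0$.

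Then I would run the identical computation with the finite measure space $(0,T)$ in place of $\Om$ and the nonnegative function $\eta\in L^2(0,T)$ in place of $w$, which produces $\|\eta^p\|_{L^2(0,T)}^2\le T^{1-p}\|\eta\|_{L^2(0,T)}^{2p}$ and hence the second inequality.

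I do not expect a genuine obstacle here: the argument is essentially a one-line use of Hölder's inequality. The only minor points to record are that $t\mapsto t^p$ is continuous on $[0,\infty)$ (so that $w^p$ and $\eta^p$ are measurable) and that the right-hand sides are finite, which is built into the hypotheses $w\in L^2(\Om)$ and $\eta\in L^2(0,T)$. Alternatively, the same estimates follow from the log-convexity (interpolation) of $L^q$-norms on a finite measure space, but the direct Hölder route is the most economical.
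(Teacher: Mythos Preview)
Your proposal is correct and matches the paper's own proof essentially line for line: both apply H\"older's inequality with exponents $\tfrac1p$ and $\tfrac1{1-p}$ to $\|w^p\|^2=\int_\Om w^{2p}\,\rd x$ to obtain $\|w^p\|^2\le|\Om|^{1-p}\|w\|^{2p}$, and then observe that the argument for $\eta$ on $(0,T)$ is identical. The additional remarks you include on measurability and finiteness are fine but not needed beyond what the paper states.
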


\begin{proof}
By $0<p<1$, we see that $\f1{1-p}>1$, and the H\"older inequality yields
$$
\|w^p\|^2=\int_\Om w^{2p}\,\rd x
\le\left(\int_\Om(w^{2p})^{\f1p}\,\rd x\right)^p
\left(\int_\Om1^{\f1{1-p}}\,\rd x\right)^{1-p}
=|\Om|^{1-p}\|w\|^{2p},
$$
which completes the proof for $w$. The proof for $\eta$ is the same.
\end{proof}

Let $A=-\tri$ with $\cD(A)=\{w\in H^2(\Om);\pa_\nu w=0\mbox{ on }\pa\Om\}$.
We number all the eigenvalues of $A$ as
$$
0=\la_1\le\la_2\le\cdots,\quad\la_n\longrightarrow\infty\ (n\to\infty),
$$
with their multiplicities. By $\{\vp_n\}_{n\in\BN}$, we denote
the complete orthonormal basis of $L^2(\Om)$ formed by
the eigenfunctions of $A$, i.e., $A\vp_n=\la_n\vp_n$ and
$\|\vp_n\|=1$ for $n\in\BN$. We can define the fractional power
$A^\be$ for $\be\ge0$, and we know that
$\|a\|_{H^{2\be}(\Om)}\le C\|A^\be a\|$ for all $a\in\cD(A^\be)$,
where the constant $C>0$ depends on  $\be,\Om$
(e.g., \cite{LM,Paz}).
 
We further introduce the Mittag-Leffler functions by
$$
E_{\al,\be}(z)=\sum_{k=0}^\infty\f{z^k}{\Ga(\al k+\be)},
\quad z\in\BC,
$$
where $0<\al<1$ and $\be>0$. It is known that $E_{\al,\be}(z)$
is an entire function in $z\in\BC$, and we can refer, e.g., to
Podlubny \cite{Po} for further properties of $E_{\al,\be}(z)$.

Henceforth, we abbreviate $u(t)=u(\,\cdot\,,t)$ and interpret
$u(t)$ as a mapping from $(0,T)$ to $L^2(\Om)$. We define
\begin{align*}
S(t)a & :=\sum_{k=1}^\infty(a,\vp_n)E_{\al,1}(-\la_n t^\al)\vp_n,\\
K(t)a & :=\sum_{k=1}^\infty(a,\vp_n)t^{\al-1}E_{\al,\al}(-\la_n t^\al)\vp_n
\end{align*}
for $a\in L^2(\Om)$ and $t>0$. Then, as was proved in
\cite{GLY,Y2023}, we have the following lemma.

\begin{lem}\label{lem4}
{\rm(i)} Let $0<\ga<1$. Then, there exists a constant
$C=C(\ga)>0$ such that 
\[
\|A^\ga S(t)a\|\le C\,t^{-\al\ga}\|a\|,\quad
\|A^\ga K(t)a\|\le C\,t^{\al(1-\ga)-1}\|a\|
\]
for all $a\in L^2(\Om)$ and all $t>0$.

{\rm(ii)} Let $v\in L^2(0,T;\cD(A))$ satisfy
$v-a\in H_\al(0,T;L^2(\Om))$ and
$$
\pa_t^\al(v-a)=-A v+F,\quad t>0,
$$
with $a\in\cD(A^{\f12})$ and $F(t)=F(\,\cdot\,,t)\in L^2(0,T;L^2(\Om))$.
Then, $v$ allows the representation
$$
v(t)=S(t)a+\int^t_0K(t-s)F(s)\,\rd s,\quad t>0.
$$

{\rm(iii)} There holds
$$
\left\|\int^t_0K(t-s)F(s)\,\rd s\right\|_{H_\al(0,T;L^2(\Om))}
\le C\|F\|_{L^2(0,T;L^2(\Om))}.
$$
\end{lem}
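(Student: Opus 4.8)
The plan is to diagonalize everything in the eigensystem $\{(\la_n,\vp_n)\}_{n\in\BN}$ of $A$ and to reduce each of (i)--(iii) to a scalar estimate. I would use three facts about Mittag-Leffler functions: that $0\le E_{\al,1}(-\sigma)\le1$ for all $\sigma\ge0$; that $0\le E_{\al,\al}(-\sigma)\le C/(1+\sigma)$ for all $\sigma\ge0$ with $C=C(\al)$ (both following from complete monotonicity of $\sigma\mapsto E_{\al,1}(-\sigma)$ and the large-argument asymptotics of $E_{\al,\be}$, see \cite{Po}); and the identity $\f\rd{\rd t}E_{\al,1}(-\la t^\al)=-\la\,t^{\al-1}E_{\al,\al}(-\la t^\al)$. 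I would also use throughout that $J^\al:L^2(0,T)\to H_\al(0,T)$, equivalently $\pa_t^\al=(J^\al)^{-1}:H_\al(0,T)\to L^2(0,T)$, is an isomorphism whose operator norm depends only on $\al$ and $T$; since $J^\al$ acts in the time variable alone and commutes with $(\,\cdot\,,\vp_n)$, this yields the norm characterization $\|v\|_{H_\al(0,T;L^2(\Om))}\asymp\bigl(\sum_n\|(v,\vp_n)\|_{H_\al(0,T)}^2\bigr)^{1/2}\asymp\|\pa_t^\al v\|_{L^2(0,T;L^2(\Om))}$, with constants depending only on $\al$ and $T$.

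For (i), expanding $A^\ga S(t)a=\sum_n\la_n^\ga(a,\vp_n)E_{\al,1}(-\la_n t^\al)\vp_n$ and applying Parseval's identity reduces the first estimate to the uniform-in-$n$ bound $\la_n^{2\ga}|E_{\al,1}(-\la_n t^\al)|^2\le Ct^{-2\al\ga}$, which follows from the bound on $E_{\al,1}$ together with the elementary inequality $\sup_{\sigma\ge0}\sigma^{2\ga}(1+\sigma)^{-2}<\infty$ (valid since $0<\ga<1$) applied with $\sigma=\la_n t^\al$. The bound for $K$ is obtained the same way, with $E_{\al,1}(-\la_n t^\al)$ replaced by $t^{\al-1}E_{\al,\al}(-\la_n t^\al)$ and the factor $t^{2\al-2-2\al\ga}$ pulled out. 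For (ii), I would project the equation onto $\vp_n$: writing $v_n:=(v(t),\vp_n)$, $a_n:=(a,\vp_n)$, $F_n:=(F(t),\vp_n)$, one gets the scalar problem $\pa_t^\al(v_n-a_n)=-\la_nv_n+F_n$ on $(0,T)$ with $v_n-a_n\in H_\al(0,T)$, whose unique solution is $v_n(t)=a_nE_{\al,1}(-\la_nt^\al)+\int_0^t(t-s)^{\al-1}E_{\al,\al}(-\la_n(t-s)^\al)F_n(s)\,\rd s$ (a standard Laplace-transform computation; see \cite{GLY,KRY}). These are exactly the Fourier coefficients of $S(t)a+\int_0^tK(t-s)F(s)\,\rd s$, so it only remains to sum in $n$, convergence in $L^2(\Om)$ being ensured by the bound $|E_{\al,1}(-\sigma)|\le1$ for the $S(t)a$ term and by part (iii) for the convolution term.

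For (iii), which I expect to be the main obstacle, put $w(t):=\int_0^tK(t-s)F(s)\,\rd s$, so that $w_n=k_n*F_n$ with $k_n(t):=t^{\al-1}E_{\al,\al}(-\la_nt^\al)\ge0$. Young's inequality for convolutions on $(0,T)$ gives $\|w_n\|_{L^2(0,T)}\le\|k_n\|_{L^1(0,T)}\|F_n\|_{L^2(0,T)}$, and the differentiation identity yields the sharp, $n$-uniform value $\|k_n\|_{L^1(0,T)}=\int_0^Tk_n(t)\,\rd t=\la_n^{-1}\bigl(1-E_{\al,1}(-\la_nT^\al)\bigr)\le\la_n^{-1}$ when $\la_n>0$ (and $\|k_1\|_{L^1(0,T)}=T^\al/\Ga(\al+1)$ when $\la_1=0$, a case in which $\la_1w_1\equiv0$); hence $\|\la_nw_n\|_{L^2(0,T)}\le\|F_n\|_{L^2(0,T)}$ for every $n$. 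On the other hand, $w_n\in H_\al(0,T)$ solves $\pa_t^\al w_n=-\la_nw_n+F_n$ (Lemma \ref{lem1} with the constant coefficient $c\equiv-\la_n$, consistent with the explicit formula), so $\|\pa_t^\al w_n\|_{L^2(0,T)}\le\la_n\|w_n\|_{L^2(0,T)}+\|F_n\|_{L^2(0,T)}\le2\|F_n\|_{L^2(0,T)}$. Squaring, summing over $n$, and invoking the norm characterization from the first paragraph gives $\|w\|_{H_\al(0,T;L^2(\Om))}^2\le C\sum_n\|\pa_t^\al w_n\|_{L^2(0,T)}^2\le4C\,\|F\|_{L^2(0,T;L^2(\Om))}^2$, which is the assertion. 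The hardest step is precisely this $n$-uniform $L^1$-bound on the kernel $k_n$: it encodes the maximal-regularity gain of two spatial derivatives against the order-$\al$ smoothing in time, and everything else — including the verification of the norm characterization of $H_\al(0,T;L^2(\Om))$ under $J^\al$ acting in $t$ alone — is bookkeeping.
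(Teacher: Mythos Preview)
The paper does not supply its own proof of this lemma; it simply records the statement as ``proved in \cite{GLY,Y2023}'' and uses it as a black box. Your eigenfunction-diagonalization argument is the standard one underlying those references and is correct in substance, including the clean route to (iii) via the exact $L^1$ mass $\int_0^Tk_n=\la_n^{-1}(1-E_{\al,1}(-\la_nT^\al))\le\la_n^{-1}$ and the identity $\pa_t^\al w_n=-\la_nw_n+F_n$.

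One expository slip in (i): you list only $0\le E_{\al,1}(-\sigma)\le1$, but your bound $\la_n^{2\ga}|E_{\al,1}(-\la_nt^\al)|^2\le Ct^{-2\al\ga}$ via $\sup_{\sigma\ge0}\sigma^{2\ga}(1+\sigma)^{-2}<\infty$ in fact requires the decay $E_{\al,1}(-\sigma)\le C/(1+\sigma)$, not merely $\le1$. That stronger estimate does hold (it is the same large-argument asymptotics from \cite{Po} you invoke for $E_{\al,\al}$), so this is only a matter of stating the correct hypothesis up front.
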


{\bf Step 2.} Let $T>0$ be arbitrarily given. We show that there exists
$u\in L^2(0,T;L^2(\Om))$ such that $u\ge0$ in $\Om\times(0,T)$ and
\[
u(t)=S(t)a+\int^t_0K(t-s) u(s)^p\,\rd s,\quad0<t<T.
\]

Henceforth, by $C>0$, we denote generic constants depending on 
$\Om,T$, and $p$ but independent of the choices of functions
$a(x),u(x,t),v(x,t)$, etc.

Lemma \ref{lem4}(i) implies
\begin{equation}\label{4.2}
\|S(t)a\|\le C\|a\|,\quad t\ge0.
\end{equation}
We choose a constant $M>0$ sufficiently large such that 
\begin{equation}\label{4.3}
C(M+C\|a\|)^p\le M.
\end{equation}
Since $0<p<1$, we can easily verify the existence of such
$M>0$ satisfying \eqref{4.3}.

With this $M>0$, we define a set $\cB\subset L^2(0,T;L^2(\Om))$ by 
$$
\cB:=\{v\in L^2(0,T;L^2(\Om));v\ge0\mbox{ in }\Om\times(0,T),
\ \|v-S(t)a\|_{L^2(0,T;L^2(\Om))}\le M\}.
$$
We define a mapping $L$ by
$$
L v(t):=S(t)a+\int^t_0K(t-s)v^p(s)\,\rd s,\quad0<t<T,\quad v\in\cB.
$$
Now we will prove
\begin{equation}\label{4.4}
L\cB\subset\cB
\end{equation}
and
\begin{equation}\label{4.5}
L:\cB\longrightarrow\cB\mbox{ is a compact operator}.
\end{equation}

\begin{proof}[Proof of \eqref{4.4}]
Let $v\in\cB$. Then, we have
\begin{equation}\label{est-v}
\|v\|_{L^2(0,T;L^2(\Om))}
\le\|v-S(t)a\|_{L^2(0,T;L^2(\Om))}+\|S(t)a\|_{L^2(0,T;L^2(\Om))}
\le M+C\|a\|
\end{equation}
by the definition of $\cB$ and \eqref{4.2}.
On the other hand, Lemma \ref{lem3} implies
\begin{align*}
\|v^p\|_{L^2(0,T;L^2(\Om))}^2 & =\int_0^T\|v^p(t)\|^2\,\rd t
\le|\Om|^{1-p}\int_0^T\|v(t)\|^{2p}\,\rd t\\
& \le|\Om|^{1-p}T^{1-p}\left(\int_0^T\|v(t)\|^2\,\rd t\right)^p
=(|\Om|\,T)^{1-p}\|v\|_{L^2(0,T;L^2(\Om))}^{2p}.
\end{align*}
Therefore, substituting \eqref{est-v} into the above inequality yields
\begin{equation}\label{4.6}
\|v^p\|_{L^2(0,T;L^2(\Om))}
\le(|\Om|\,T)^{\f{1-p}2}\|v\|^p_{L^2(0,T;L^2(\Om))}
\le C(M+C\|a\|)^p.
\end{equation}
Consequently, Lemma \ref{lem4}(iii) and \eqref{4.3} and \eqref{4.6} imply
\begin{align*}
\|L v(t)-S(t)a\|_{L^2(0,T;L^2(\Om))}
& =\left\|\int^t_0K(t-s)v^p(s)\,\rd s\right\|_{L^2(0,T;L^2(\Om))}\\
& \le\left\|\int^t_0K(t-s)v^p(s)\,\rd s\right\|_{H_\al(0,T;L^2(\Om))}
\le C\|v^p\|_{L^2(0,T;L^2(\Om))}\\
& \le C(M+C\|a\|)^p\le M.
\end{align*}
Next, by $a\ge0$ in $\Om$ and $v\ge0$ in $\Om\times(0,T)$, we can
apply the comparison principle (e.g., \cite{LY}) to have
$$
\int^t_0K(t-s)v^p(s)\,\rd s\ge0\quad\mbox{in }\Om\times(0,T),
$$
and so, $Lv\ge0$ in $\Om\times(0,T)$. Hence, $L v\in\cB$, and thus the
proof of \eqref{4.4} is complete.
\end{proof}

\begin{proof}[Proof of \eqref{4.5}]
Since $S(t)a$ is a fixed element independent of $v$, it suffices to verify that
$$
L_0v(t):=\int^t_0K(t-s)v^p(s)\,\rd s
$$
is a compact operator from $\cB$ to $L^2(0,T;L^2(\Om))$.
Let $M_0>0$ be an arbitrarily chosen constant and let
$\|v\|_{L^2(0,T;L^2(\Om))}\le M_0$, $v\ge0$ in $\Om\times(0,T)$.
Then, Lemma \ref{lem3} indicates
\begin{equation}\label{4.7}
\|v^p\|_{L^2(0,T;L^2(\Om))}\le C\|v\|^p_{L^2(0,T;L^2(\Om))}
\le C M_0^p,
\end{equation}
with which we combine Lemma \ref{lem4}(iii) to obtain
\begin{equation}\label{4.8}
\|L_0v\|_{H_\al(0,T;L^2(\Om))}\le C M_0^p.
\end{equation}

Next, for small $\ve\in(0,1)$, in view of Lemma \ref{lem4}(i),
we estimate
\begin{align*}
\|A^\ve L_0v(t)\| & =\left\|\int^t_0A^\ve K(t-s)v^p(s)\,\rd s\right\|
\le\int_0^t\|A^\ve K(t-s)v^p(s)\|\,\rd s\\
& \le C\int^t_0(t-s)^{(1-\ve)\al-1}\|v^p(s)\|\,\rd s.
\end{align*}
Hence, in terms of \eqref{4.7}, Young's convolution inequality implies
\begin{align*}
\|A^\ve L_0v\|_{L^2(0,T;L^2(\Om))}
& \le C\left\|\int^t_0(t-s)^{(1-\ve)\al-1}\|v^p(s)\|\,\rd s\right\|_{L^2(0,T)}\\
& \le C\|t^{(1-\ve)\al-1}\|_{L^1(0,T)}
\left(\int^T_0\|v^p(t)\|^2\,\rd t\right)^{\f12}\\
& \le C\|v^p\|_{L^2(0,T;L^2(\Om))}\le C M_0^p.
\end{align*}
Since $\cD(A^\ve)\subset H^{2\ve}(\Om)$, we have
\begin{equation}\label{4.9}
\|L_0v\|_{L^2(0,T;H^{2\ve}(\Om))}
\le C\|A^\ve L_0v\|_{L^2(0,T;L^2(\Om))}\le C M_0^p.
\end{equation}
On the other hand, we know that the embedding
$L^2(0,T;H^{2\ve}(\Om))\cap H^\al(0,T;L^2(\Om))
\subset L^2(0,T;L^2(\Om))$ is compact
(e.g., Temam \cite[Theorem 2.1, p.\! 271]{Te}),
so that \eqref{4.8} and \eqref{4.9} imply that
$L_0:\cB\subset L^2(0,T;L^2(\Om))\longrightarrow\cB$ is compact.
This completes the proof of \eqref{4.5}. 
\end{proof}

Since $\cB$ is a closed and convex set in $L^2(0,T;L^2(\Om))$,
we can apply the Schauder fixed-point theorem to conclude that
$L$ possesses a fixed-point $u$ satisfying
\begin{equation}\label{4.10}
u(t)=S(t)a+\int^t_0K(t-s)u(s)^p\,\rd s,\quad0<t<T,
\quad u\ge0\quad\mbox{in }\Om\times(0,T).
\end{equation}

{\bf Step 3.} Recalling that $\f34<\ga\le1$, we note that if
$a\in H^{2\ga}(\Om)$ and $\pa_\nu a=0$ on $\pa\Om$,
then $a\in\cD(A^\ga)$. Now it remains to prove that
the fixed-point $u$ satisfies the regularity \eqref{1.2}.
To this end, we separate
$$
u(t)-a=(S(t)a-a)+\int^t_0K(t-s)u(s)^p\,\rd s
=:u_1(t)+u_2(t),\quad0<t<T.
$$

First, we verify \eqref{1.2} for $u_1(t)$. In the same way as that for
Yamamoto \cite[Lemma 5(i)]{Y2023}, we can prove that
$\pa_t^\al u_1(t)=-AS(t)a$ in $(0,T)$ and
$u_1\in H_\al(0,T;L^2(\Om))$ by $a\in\cD(A^\ga )$ with $\ga>\f34$.
Therefore, we obtain
$$
u_1\in H_\al(0,T;L^2(\Om)),\quad S(t)a\in L^2(0,T;\cD(A)).
$$

Next, we verify \eqref{1.2} for $u_2(t)$. In terms of
$u^p\in L^2(0,T;L^2(\Om))$, Lemma \ref{lem4}(iii) implies that
$u_2\in H_\al(0,T;L^2(\Om))$ and $\pa_t^\al u_2=-Au_2+u(t)^p$
for $0<t<T$. Therefore, we have $-A u_2\in L^2(0,T;L^2(\Om))$
or equivalently $u_2\in L^2(0,T;\cD(A))$.

Consequently, it is verified that the fixed-point $u$ satisfies \eqref{1.2}.
By \eqref{1.2} and \eqref{4.10} we see that
$u$ satisfies \eqref{1.1} in terms of \cite[Lemma 5]{Y2023}.
Thus, the proof of Theorem \ref{thm2} is complete.

\section{Concluding remarks and discussions}\label{sec-rem}

{\bf1.}
In this article, we consider the blowup exclusively in 
$L^1(\Om)$.  If we will discuss in the space $L^{\infty}(\Om)$, for example,
then we can more directly use a lower solution.  More precisely,
in \eqref{1.1} assuming that $\min_{x\in\overline\Om} a(x) 
=: a_1 > 0$, if we can find a function $g(t)$ satisfying
$$
\pa_t^\al (g(t) - a_1) \le g(t)^p, \quad 0<t<T, 
$$
then $\underline{u}(x,t) := g(t)$ for $x\in \Om$ and 
$0<t<T$ is a lower solution to \eqref{1.1}, i.e.,
$$
\begin{cases}
\pa_t^\al (\underline{u} - a_1) \le \tri \underline{u}
 + \underline{u}^p & \mbox{in }\Om\times(0,T),\\
\pa_{\nu}\underline{u} = 0 & \mbox{on }\pa\Om
\times (0,T).
\end{cases}
$$
Then, the comparison principle (e.g., \cite{LY}) yields
$$
g(t) \le u(x,t), \quad x\in \Om, \, 0<t<T.
$$

As $g(t)$, we take a similar function to \eqref{def-lowsol}:
$$
g(t) := a_1\left( \f{T}{T-t}\right)^m, \quad m\in \BN.
$$
Then, by \eqref{eq-calc2} we have
$$
\pa_t^\al (g(t)-a_1) \le \f{a_1m}{T^{\al}\Ga(2-\al)}
\left( \f{T}{T-t}\right)^{m+1}, \quad 0<t<T.
$$
Therefore, for $mp - (m+1) \ge 0$, it suffices to choose $T>0$ such that 
$$
 \f{a_1m}{T^{\al}\Ga(2-\al)}
\left( \f{T}{T-t}\right)^{m+1} \le a_1^p \left( \f{T}{T-t}\right)^{mp}
= g(t)^p,  \quad 0<t<T,
$$
i.e.,
$$
 \f{a_1^{1-p}m}{T^{\al}\Ga(2-\al)}
\le \xi^{mp-(m+1)} \quad \mbox{for all $\xi \ge 1$}
$$
by setting $\xi := \f{T}{T-t} \ge 1$. Hence, $g(t)$ is a lower solution
if 
$$
 \f{a_1^{1-p}m}{T^{\al}\Ga(2-\al)} \le 1, \quad
\mbox{i.e.}, \quad 
T \ge \left( \f{a_1^{1-p}m}{\Ga(2-\al)}\right)^{\f1{\al}}
$$
for $mp \ge m+1$.  Choosing the minimum $m\in\BN$ and arguing similarly to 
the final part of the proof of Theorem \ref{thm1}, we obtain an inequality for 
the blowup time $T_{\al,p,a}(\infty)$ in $L^{\infty}(\Om)$:
\begin{equation}\label{5.1}
T_{\al,p,a}(\infty) 
\le \left( \f{ \left[ \f1{p-1} \right] + 1}
{\Ga(2-\al)a_1^{p-1}} \right)^{\f1{\al}}
=: T^*_{\infty}(\al,p,a),
\end{equation}
where $[q]$ denotes the maximum natural number not exceeding $q>0$.

We compare $T_{\infty}^*(\al,p,a)$ with an upper bound $T^*(\al,p,a)$ of 
the blowup time in $L^1(\Om)$.  Noting that 
$a_1 \le \f1{| \Om|}\int_{\Om} a(x)\, \rd x$, we can interpret 
that $\f1{|\Om|}\int_{\Om} a(x)\, \rd x$ is comparable with 
$a_1$ and so we consider the case where $a_1 = \f1{|\Om|}
\int_{\Om} a(x)\, \rd x$. Then, by \eqref{1.4}, we have
\begin{equation}\label{5.2}
T_{\al,p,a} \le T^*(\al,p,a) 
= \left( \f{\f1{p-1}}{\Ga(2-\al)a_1^{p-1}}
\right)^{\f1{\al}}.
\end{equation}
Hence, $\left[ \f1{p-1}\right] + 1 \ge \f1{p-1}$ implies
$T^*(\al,p,a) < T_{\infty}^*(\al,p,a)$.

To sum up, for the $L^1(\Om)$-blowup time $T_{\al,p,a}$ and
the $L^{\infty}(\Om)$-blowup time $T_{\al,p,a}(\infty)$,
our upper bounds $T^*(\al,p,a)$ and $T^*_{\infty}(\al,p,a)$ of
$T_{\al,p,a}$ and $T_{\al,p,a}(\infty)$ are given by \eqref{5.2} 
and \eqref{5.1} respectively.
Although we should expect $T_{\infty}^*(\al,p,a) \le T^*(\al,p,a)$ by means
$T_{\al,p,a}(\infty) \le T_{\al,p,a}$, which follows from
$\|u(\,\cdot\,,t)\|_{L^1(\Om)}
\le C\|u(\,\cdot\,,t)\|_{L^{\infty}(\Om)}$, but our bounds do not satisfy.
The upper bound depends on our choice of lower solutions, and
it is a future work to discuss sharper bounds.\medskip

{\bf2.}
Restricting the nonlinearity to the polynomial type $u^p$, in this article,
we investigate semilinear time-fractional diffusion equations
with the homogeneous Neumann boundary condition.
With nonnegative initial values, we obtained the blowup of solutions 
with $p>1$
as well as the global-in-time existence of solutions with $0<p<1$.
The key ingredient for the latter is the Schauder fixed-point theorem,
whereas that for the former turns out to be a comparison principle
for time-fractional ordinary differential equations
(see Lemma \ref{lem2}) and the construction of a lower solution
of the form \eqref{def-lowsol}. We can similarly discuss
the blowup for certain semilinear terms  like the exponential type
$\mathrm e^u$ and some coupled systems. More generally,
it appears plausible to consider a general convex semilinear term $f(u)$,
which deserves further investigation.

Technically, by introducing
\[
\eta(t):=\int_\Om u(x,t)\,\rd x=(u(\,\cdot\,,t),1)_{L^2(\Om)},
\]
we reduce the blowup problem to the discussion of a time-fractional
ordinary differential equation. As was mentioned in Remark \ref{rmk1},
indeed $1$ is the eigenfunction for the smallest eigenvalue $0$ of
$-\tri$ with $\pa_\nu u=0$. On this direction,
it is not difficult to replace $-\tri$ with a more general elliptic operator.
Actually, in place of $1$, one can choose an eigenfunction $\vp_1$
for the smallest eigenvalue $\la_1$ and consider
$\eta(t):=(u(\,\cdot\,,t),\vp_1)_{L^2(\Om)}$ to follow the above
arguments. In this case, it is essential that $\la_1\ge0$ and $\vp_1$
does not change sign. We can similarly discuss the homogeneous
Dirichlet boundary condition.\medskip

{\bf3}.
In the proof of Theorem \ref{thm1}, we obtained
an upper bound $T^*(\al,p,a)$ of the blowup time $T$ (see \eqref{1.4}),
but there is no guarantee for its sharpness. Sharp estimates for
the blowup time in the time-fractional case is expected to be more
complicated than the parabolic case, which is postponed to a future topic.

We briefly investigate the monotonicity of
$$
T^*(\al,p,a) =
\left( \f1
{(p-1)\Ga(2-\al)\left( \f1{| \Om|}
\int_{\Om} a(x)\, \rd x \right)^{p-1} } \right)^{\f1{\al}}>0
$$
as a function of $\al \in (0,1)$ with fixed $p$ and $a$. Setting
$$
C_{p,a}:= (p-1)\left( \f1{|\Om|}
\int_{\Om} a(x)\, \rd x \right)^{p-1}>0,
$$
we can verify that there exist positive constants $C^*\ge1$ and $C_*\le1$ such that 
$T^*(\al,p,a)$ is monotone increasing in $\al$ if $C_{p,a}\ge C^*$ and 
monotone decreasing in $\al$ if $C_{p,a}\le C_*$.

Indeed, setting $f(\al):= T^*(\al,p,a)$ for simplicity for fixed 
$p$ and $a$, we have
$$
\log f(\al) = -\f1{\al}\log (C_{p,a}\Ga(2-\al)),
$$
i.e.,
$$
\f{f'(\al)}{f(\al)} = -\f1{\al}
\f{\f{\rd}{\rd\al}(\Ga(2-\al))}{\Ga(2-\al)}
+ \f1{\al^2}\log (C_{p,a}\Ga(2-\al))
= \f1{\al^2}\left( \log(C_{p,a}\Ga(2-\al))
+ \al \f{\Ga'(2-\al)}{\Ga(2-\al)}\right)
$$
for $0<\al<1$.  We set $\delta_0:= \min_{0\le\al\le 1}
\Ga(2-\al) > 0$ and
$M_1:= \max_{0\le\al\le1}|\f{\Ga'(2-\al)}{\Ga(2-\al)}|$.
Then,
$$
\f{f'(\al)}{f(\al)} \ge \f1{\al^2}
(\log (C_{p,a}\delta_0) - M_1) > 0
$$
if $C_{p,a}>0$ is sufficiently large. On the other hand,
since $\Ga(2-\al)\le1$ for $0\le\al\le1$, we see that 
$$
\f{f'(\al)}{f(\al)} \le \f1{\al^2}
(\log C_{p,a} + \al M_1) \le \f1{\al^2}(\log C_{p,a} + M_1) < 0
$$
if $C_{p,a}>0$ is sufficiently small. 

Since 
$$
\log (\Gamma(2-\al)^{-\frac{1}{\al}})
= \frac{\log \Gamma(2-\al) - \log \Gamma(2)}{-\alpha}\,
\longrightarrow \, \frac{d}{d\beta}\Gamma(\beta)\vert_{\beta=2}
$$
as $\alpha \to 0+$, we have $\lim_{\al\to 0+} f(\al) = e^{\Gamma'(2)}$
if $C_{p,a} = 1$.  Therefore,
\[
\lim_{\al\to0+}f(\al)=\begin{cases}
+\infty, & C_{p,a}<1,\\
\mathrm e^{\Ga'(2)}, & C_{p,a}=1,\\
0, & C_{p,a}>1.
\end{cases}
\]
In particular, $f(\al)$ cannot be monotone increasing for $C_{p,a}<1$
and cannot be monotone decreasing for $C_{p,a}>1$,
which implies $C^*\ge1$ and $C_*\le1$.\medskip

\renewcommand{\labelenumi}{(\roman{enumi})}
{\bf4}.
Related to the blowup, we should study the following issues:
\begin{enumerate}
\item Lower bounds or characterization of the blowup times.
\item Asymptotic behavior or lower bound of a solution near the 
blowup time.
\item Blowup set of a solution $u(x,t)$, which means the set of $x\in\Om$,
where $|u(x,t)|$ tends to $\infty$ as $t$ approaches the blowup time.
\end{enumerate}
For $\al=1$, comprehensive and substantial works have been accomplished.  
We are here restricted to refer to Chapter II of Quittner and Souplet \cite{QS}
and the references therein.
However, for $0<\al<1$, by the memory effect of 
$\partial_t^{\al}u(\,\cdot\,,t)$ which involves the past value of $u$,
several useful properties for discussing the above issues (i)--(iii) do not 
hold.
Thus, the available results related to the blowup are still 
limited for $0<\al<1$, and it is up to future studies to pursue (i)--(iii).
\bigskip

{\bf Acknowledgements:} This work was completed during the third author's stay at
Sapienza Universit\`a di Roma in January and February 2023.
The authors thank the anonymous referees for careful reading and valuable comments.\bigskip

{\bf Funding information:} This work is supported by MUR\_PRIN 201758MTR2\_003
``Direct and inverse problems for partial differential equations: theoretical aspects and applications''.
The Istituto Nazionale di Alta Matematica (IN$\delta$AM) and the
``Gruppo Nazionale per l'Analisi Matematica, la Probabilit\'a e le loro Applicazioni'' (GNAMPA)
also supported the authors, in particular, in the organization of the GNAMPA Workshop
``Recent advances in direct and inverse problems for PDEs and applications''
(Sapienza Universit\`a di Roma, December 5--7, 2022).
The first author is supported by the French-German-Italian Laboratoire International Associ\'e (LIA), named
COPDESC, on Applied Analysis, issued by CNRS, MPI, and IN$\delta$AM.
The second author is supported by Grant-in-Aid for Early-Career Scientists 22K13954 from
Japan Society for the Promotion of Science (JSPS).
The third author is supported by Grants-in-Aid for Scientific Research (A) 20H00117 and
Grant-in-Aid for Challenging Research (Pioneering) 21K18142, JSPS.
The second and the third authors are supported by Fund for the
Promotion of Joint International Research (International Collaborative Research) 23KK0049, JSPS.
The third author was also both INdAM visiting professor and GNAMPA visiting professor in 2022.\bigskip

{\bf Conflict of interest:} The authors state no conflict of interest.



\begin{thebibliography}{99} %

\bibitem{Ad}
R.A. Adams, {\it Sobolev Spaces}, Academic Press, New York, 1975.

\bibitem{AAAKT}
B. Ahmad, M.S. Alhothuali, H.H. Alsulami, M. Kirane and S. Timoshin,
On a time fractional reaction diffusion equation,
{\it Appl. Math. Comput.} {\bf 257} (2015), 199--204.

\bibitem{BRT23}
M.B. Borikhanov, M. Ruzhansky and B.T. Torebek,
Qualitative properties of solutions to a nonlinear time-space fractional diffusion equation,
{\it Fract. Calc. Appl. Anal.} {\bf 26} (2023), 111--146.

\bibitem{CT22}
P. Chen and X. Tang,
Ground states for reaction-diffusion with spectrum point zero,
{\it J. Geom. Anal.} {\bf32} (2022), Paper No. 308 (34pp).

\bibitem{DPV}
J. I. D\'iaz, T. Pierantozzi and L. V\'azquez, Finite time extinction 
for nonlinear fractional evolution equations and related properties, 
{\it Electronic Journal of Differential Equations} {\bf 2016} (2016), 1--13.

\bibitem{D22}
Y. Du,
Propagation and reaction-diffusion models with free boundaries,
{\it Bull. Math. Sci.} {\bf12} (2022), Paper No. 2230001 (56pp).

\bibitem{FQZC23}
W. Feng, D. Qin, R. Zhu and Z. Chen,
Global well-posedness for MHD with magnetic diffusion and damping term in $\BR^2$,
{\it J. Geom. Anal.} {\bf33} (2023), Paper No. 131 (31pp).

\bibitem{FI}
Y. Fujishima and K. Ishige,
Blow-up for a semilinear parabolic equation with large
diffusion on $\BR^N$, {\it J. Differential Equations} {\bf 250} (2011), 2508--2543.

\bibitem{F}
H. Fujita,
On the blowing up of solutions of the Cauchy problem for $u_t=\tri u+u^{1+\al}$,
{\it J. Fac. Sci. Univ. Tokyo Sect. I} {\bf13} (1966), 109--124.

\bibitem{GMS23}
M. Ghergu, Y. Miyamoto and M. Suzuki,
Solvability for time-fractional semilinear parabolic equations with singular initial data,
{\it Math. Methods Appl. Sci.} {\bf46} (2023), 6686--6704.

\bibitem{GLY}
R. Gorenflo, Y. Luchko and M. Yamamoto,
Time-fractional diffusion equation in the fractional Sobolev spaces,
{\it Fract. Calc. Appl. Anal.} {\bf18} (2015), 799--820.

\bibitem{HKL}
D. Hnaien, F. Kellil and R. Lassoued,
Blowing-up solutions and global solutions to a fractional differential equations,
{\it Fract. Differ. Calc.} {\bf4} (2014), 45--53.

\bibitem{IshiY}
K. Ishige and H. Yagisita,
Blow-up problems for a semilinear heat equation with large diffusion,
{\it J. Differential Equations} {\bf 212} (2005), 114--128.

\bibitem{Kp}
S. Kaplan,
On the growth of solutions of quasilinear parabolic equations,
{\it Comm. Pure. Appl. Math.} {\bf 16} (1963), 305--330.

\bibitem{KLT}
M. Kirane, Y. Laskri and N.-E. Tatar,
Critical exponents of Fujita type for certain evolution equations and systems with spatio-temporal fractional derivatives,
{\it J. Math. Anal. Appl.} {\bf312} (2005), 488--501.

\bibitem{K23}
M. Kojima,
On solvability of a time-fractional doubly critical semilinear equation, and itsquantitative approach to the non-existence result on the classical counterpart,
preprint, arXiv:2301.13409.

\bibitem{KRY}
A. Kubica, K. Ryszewska and M. Yamamoto,
{\it Time-Fractional Differential Equations: A Theoretical Introduction},
Springer-Verlag, Tokyo, 2020.

\bibitem{LM}
J.L. Lions and E. Magenes,
{\it Non-homogeneous Boundary Value Problems and Applications},
vols. I, II, Springer-Verlag, Berlin, 1972.

\bibitem{L2009}
Y. Luchko,
Maximum principle for the generalized time-fractional diffusion equation,
{\it J. Math. Anal. Appl.} {\bf351} (2009), 218--223.

\bibitem{LY}
Y. Luchko and M. Yamamoto,
Comparison principles for the linear and semilinear time-fractional diffusion equations with the Robin boundary condition,
preprint, arXiv:2208.04606.

\bibitem{Pa}
L.E. Payne,
{\it Improperly Posed Problems in Partial Differential Equations}, 
SIAM, Philadelphia, PA, 1975.

\bibitem{Paz}
A. Pazy,
{\it Semigroups of Linear Operators and Applications to Partial Differential Equations},
Springer-Verlag, Berlin, 1983.

\bibitem{Po}
I. Podlubny,
{\it Fractional Differential Equations},
Academic Press, San Diego, 1999.

\bibitem{QS}
P. Quittner and P. Souplet,
{\it Superlinear Parabolic Problems Blow-up, Global Existence and Steady States},
second edition, Springer Nature, Cham, Switzerland, 2019.

\bibitem{SY}
K. Sakamoto and M. Yamamoto,
Initial value/boundary value problems for  fractional diffusion-wave equations and applications to some inverse problems,
{\it J. Math. Anal. Appl.} {\bf382} (2011), 426--447.

\bibitem{S21}
M. Suzuki,
Local existence and nonexistence for fractional in time weakly coupled reaction-diffusion systems,
{\it SN Partial Differ. Equ. Appl.} {\bf2} (2021), article no. 2.

\bibitem{S22}
M. Suzuki,
Local existence and nonexistence for fractional in time reaction-diffusion equations and systems with rapidly growing nonlinear terms,
{\it Nonlinear Anal.} {\bf222} (2022), 112909.

\bibitem{Te}
R. Temam, {\it Navier-Stokes Equations}, North-Holland, Amsterdam, 1979.

\bibitem{TX23}
Y. Tian and Z. Xiang,
Global boundedness to a 3D chemotaxis-Stokes system with porous medium cell diffusion and general sensitivity,
{\it Adv. Nonlinear Anal.} {\bf12} (2023), 23--53.

\bibitem{VZ}
V. Vergara and R. Zacher,
Stability, instability, and blowup for time fractional and other nonlocal in time semilinear subdiffusion equations,
{\it J. Evol. Equ.} {\bf17} (2017), 599--626.

\bibitem{Y2023}
M. Yamamoto,
Fractional calculus and time-fractional differential equations: Revisit and construction of a theory,
{\it Mathematics} {\bf 10} (2022) https://www.mdpi.com/2227-7390/10/5/698

\bibitem{Za}
R. Zacher,
Weak solutions of abstract evolutionary integro-differential equations in Hilbert spaces,
{\it Funkcial. Ekvac.} {\bf52} (2009), 1--18.

\bibitem{ZS15}
Q.-G. Zhang and H.-R. Sun,
The blowup and global existence of solutions of Cauchy problems for a time fractional diffusion equation,
{\it Topol. Methods Nonlinear Anal.} {\bf46} (2015), 69--92.

\end{thebibliography}
\end{document}